\documentclass[a4paper, 12pt,reqno]{amsart}
\usepackage{amsmath,amssymb,amsthm,enumerate}%,backref,cite
\flushbottom
\allowdisplaybreaks
\theoremstyle{plain}
\newtheorem{theorem}{Theorem}
\newtheorem*{theorem*}{Theorem}
\newtheorem{lemma}{Lemma}
\newtheorem*{lemma*}{Lemma}
\theoremstyle{definition}

\newtheorem*{definition*}{Definition}
\theoremstyle{remark}
\newtheorem{remark}{Remark}
\newtheorem*{remark*}{Remark}
\theoremstyle{example}

\newtheorem*{example*}{Example}
\theoremstyle{problem}

\newtheorem*{problem*}{Problem}
\theoremstyle{hypothesis}

\newtheorem*{hypothesis*}{Hypothesis}
\theoremstyle{corollary}
\newtheorem{corollary}{Corollary}
\newtheorem*{corollary*}{Corollary}

\frenchspacing \righthyphenmin=2 \emergencystretch=5pt
\hfuzz=0.5pt \tolerance=400 \oddsidemargin=-3mm \evensidemargin=-3mm 
\textwidth=175mm \textheight=260mm
\topmargin=-15mm

\begin{document}
\title[Singular representations of real numbers]{One example of singular representations of real numbers from the unit interval}

 \subjclass[2010]{11K55, 26A27, 11J72, 11H71, 68P30, 
94B75, 11H99, 94B27}
% Key words
\keywords{representation of real numbers;   coding information; Salem function; Lebesgue measure; permutations of samples ordered with reiterations of $k$ numbers from $\{0,1, \dots, q-1\}$.}
\author{Symon Serbenyuk}
\maketitle
\text{\emph{simon6@ukr.net}}\\
\text{\emph{Kharkiv National University of Internal Affairs, Ukraine }}

\begin{abstract}

In this article,  for modelling numeral systems, the operator approach, which is introduced in \cite{Symon2023},  is generalized for a certain case. An example of such numeral systems is introduced and  considered. 
 \maketitle

\end{abstract}

%%%%%%%%%%%%%%%%%%%%%%%%%%%%%%%%%%%%%%%%%%%%%%%%%%%%%%%%%%%%%%%%%%%%%%%%

\section{Introduction}

In the modern science, it is widely used modelling of various numeral systems and their investigations for constructing objects and techniques of applied mathematics for modelling real objects and phenomena in economics, physics, computer science and computer security, social sciences, etc. One can note that constructing of new representations of real numbers   is an important tool for the following areas of science: applications of non-cracking the coding mechanism in cybersecurity, encoding and decoding information, and modelling and studying of  ``pathological" mathematical objects (Cantor and Moran sets, singular functions, non-differentiable, or nowhere monotonic functions, etc.; the notion of  ``pathology" in mathematics is explained in \cite{35}). Pathological mathematical objects are widely applied in various areas of science (see \cite{1, 5, 11, 18, 36, 37, 38, Symon2023}).

Finally, let us remark that there exist a number of research devoted to modeling and investigations of different numeral systems and pathological mathematical objects (for example, see surveys in \cite{1, 5, IS2009, Renyi1957, 10, 15, 76, 70,  S21}, etc.).

In the present research,  certain results from the paper \cite{Symon2023} are generalized, as well as investigations of  problems introduced in the last-mentioned paper, are begun. 

An aproach for modelling new representations of real numbers is based on using certain type functions by several ways of constructing. One can use pathological functions for modelling expansions of real numbers and vice versa, as well as one can note the other technique  for modelling Cantor series expansions with non-integer (fractional) bases (see \cite{Symon2023} and the first preprint of the last paper \cite{Symon-preprint2019} of 2019), quasy-nega-representations, etc. (se also \cite{S.Serbenyuk}).

Expansions introduced in this paper can be modeled by the singular Salem function and a certain operator of changes of digits in the representation. Let us begin with definitions. 

Let $q>1$ be a fixed positive integer. It is well known that any number $x\in [0,1]$ can be represented by the following form
$$
\sum^{\infty} _{k=1}{\frac{i_k}{q^k}}:= \Delta^{ q} _{i_1i_2...i_k...}=x,
$$
where $i_k\in A \equiv\{0,1,\dots , q-1\}$. The last-mentioned representation is called \emph{the q-ary representation} of $x$. 

Let $\eta$ be a random variable defined by a $q$-ary expansion 
$$
\eta= \frac{\xi_1}{q}+\frac{\xi_2}{q^2}+\frac{\xi_3}{q^3}+\dots+\frac{\xi_{k}}{q^{k}}+\dots := \Delta^{q} _{\xi_1\xi_2...\xi_{k}...},
$$
where digits $\xi_k$ $(k=1,2,3, \dots)$ are random and taking the values $0,1,\dots ,q-1$ with probabilities ${p}_{0}, {p}_{1}, \dots , {p}_{q-1}$. That is, $\xi_k$ are independent and  $P\{\xi_k=i_k\}={p}_{i_k}$, $i_k \in A$. Here $0<p_t<1$, $P_q=\{p_0, p_1, \dots , p_{q-1}\}$, and $\sum^q _{t=0}{p_t}=1$.

From the definition of distribution function and the  expressions 
$$
\{\eta<x\}=\{\xi_1<i_1\}\cup\{\xi_1=i_1,\xi_2<i_2\}\cup\cdots 
$$
$$
\cdots \cup\{\xi_1=i_1,\xi_2=i_2,\dots ,\xi_{k-1}<i_{k-1}\}\cup
$$
$$
\cup\{\xi_1=i_1,\xi_2=i_2,\dots ,\xi_{k-1}=i_{k-1},\xi_{k}<i_{k}\}\cup \dots,
$$
$$
P\{\xi_1=i_1,\xi_2=i_2,\dots ,\xi_{k-1}=i_{k-1}, \xi_k<\varepsilon_{k}\}=
\beta_{i_{k}}\prod^{k-1} _{r=1} {{p}_{i_r}},
$$
it follows that the following is true: the distribution function  ${S}_{\eta}$ of the random variable $\eta$ can be represented in the following form for $x=\Delta^q _{i_1i_2...i_k...}\in [0, 1]$:
$$
{S}_{\eta}(x)=S(x)=\beta_{i_1}+\sum^{\infty} _{k=2} {\left({\beta}_{i_k} \prod^{k-1} _{r=1} {{p}_{i_r}}\right)}.
$$
The last-mentioned function was introduced by Salem in \cite{Salem} and is called \emph{the Salem function}. This function is an increasing singular function.

Let us consider an analytic representation of the Salem function as an expansion of real numbers from $[0, 1]$, since this function is a defined and continuous on $[0, 1]$. That is,
$$
{S}(x)=\beta_{i_1}+\sum^{\infty} _{k=2} {\left({\beta}_{i_k} \prod^{k-1} _{r=1} {{p}_{i_r}}\right)}=\Delta^{P_q} _{i_1i_2...i_k...}=x\in [0, 1].
$$

The notation $\Delta^{P_q} _{i_1i_2...i_k...}$ is called \emph{ the singular Salem representation  (or $P_q$-representation) of $x\in [0, 1]$}.

\begin{remark}
It is well-known that for a $q$-ary expansions, numbers of the form
$$
\Delta^{q} _{i_1i_2...i_{m-1}i_m00000...}=\Delta^{q} _{i_1i_2...i_{m-1}i_m(0)}=\Delta^{q} _{i_1i_2...i_{m-1}[i_m-1][q-1][q-1][q-1]...}=\Delta^{q} _{i_1i_2...i_{m-1}[i_m-1](q-1)}
$$
are called \emph{$q$-rational}. The rest of numbers are \emph{$q$-irrational} and have the unique $q$-representation. 

Since $S$ is a distribution function, we get that numbers of the form
$$
\Delta^{P_q} _{i_1i_2...i_{m-1}i_m(0)}=\Delta^{P_q} _{i_1i_2...i_{m-1}[i_m-1](q-1)}
$$
are \emph{$P_q$-rational}. The rest of numbers are \emph{$P_q$-irrational} and have the unique $P_q$-representation. 
\end{remark}

Let us consider the case when $q=3$. Let $\theta$ be an operator defined on digits $\{0, 1, 2\}$ by the following rule: $\theta(0)=0$, $\theta(1)=2$, and  $\theta(2)=1$.

In this paper, the main attention is given to the $P_{\theta}$-representation of real numbers which related to the $P_3$-representation by the following rule:
\begin{equation}
\label{eq:1}
\Delta^{P_{\theta}} _{\gamma_1\gamma_2..\gamma_k...}=\Delta^{P_3} _{\theta(i_1)\theta(i_2)...\theta(i_k)...}:=\beta_{\theta(i_1)}+\sum^{\infty} _{k=2}{\left(\beta_{\theta(i_{k})}\prod^{k-1} _{r=1} {p_{\theta(i_r)}}\right)},
\end{equation}
where $i_k \in\{0, 1, 2\}$.

Numbers of the form
$$
\Delta^{P_\theta} _{\gamma_1\gamma_2...\gamma_{m-1}\gamma_m(0)}=\Delta^{P_\theta} _{\gamma_1\gamma_2...\gamma_{m-1}[\gamma_m-1](2)}
$$
are \emph{$P_\theta$-rational}. The rest of numbers are \emph{$P_\theta$-irrational} and have the unique $P_\theta$-representation. 

The $P_\theta$-representation of real numbers is a generalization of the $3^{'}$-representation (\cite{Symon2023}) and coincides with the last representation under the condition $p_0=p_1=p_2=\frac 1 3$.

To investigate relationships of certain generalized positive and alternating (sign-variable)  expansions of real numbers, let us consider an operator aproach and model a simple example of numeral system, the geometry of which is a generalization of the geometries of some positive and alternating expansions.

\section{The basis of the metric theory}

\begin{lemma}
Each number $x\in[0,1]$ can be represented in terms of the $P_\theta$-representation \eqref{eq:1}. In addition, every  $P_\theta$-irrational number has the unique $P_\theta$-representation, and every $P_\theta$-rational number has two different representations.
\end{lemma}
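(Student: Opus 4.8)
The plan is to reduce all three assertions to the corresponding, already available facts about the $P_3$-representation, exploiting that $\theta$ is a bijection of the digit alphabet $\{0,1,2\}$ (indeed an involution, $\theta\circ\theta=\mathrm{id}$). First I would record the structural observation behind \eqref{eq:1}: writing $\Phi$ for the map $(\gamma_k)_{k\ge1}\mapsto(\theta(\gamma_k))_{k\ge1}$ on the sequence space $\{0,1,2\}^{\infty}$, the defining identity says precisely that $\Delta^{P_\theta}_{\gamma_1\gamma_2\dots}=\Delta^{P_3}_{\theta(\gamma_1)\theta(\gamma_2)\dots}$, i.e. $\Phi$ carries a $P_\theta$-name of a point to a $P_3$-name of the \emph{same} point. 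Since $\theta$ is a bijection of $\{0,1,2\}$, the induced map $\Phi$ is a bijection of $\{0,1,2\}^{\infty}$ equal to its own inverse; in particular it sends distinct sequences to distinct sequences.

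Existence would then follow immediately: as $(\gamma_k)$ runs over $\{0,1,2\}^{\infty}$, so does $(\theta(\gamma_k))$, whence
$$
\{\Delta^{P_\theta}_{\gamma_1\gamma_2\dots}\}=\{\Delta^{P_3}_{j_1j_2\dots}\}=S\big([0,1]\big)=[0,1],
$$
the last equalities because $\Delta^{P_3}_{j_1j_2\dots}=S(\Delta^{3}_{j_1j_2\dots})$, every point of $[0,1]$ has a $3$-ary expansion, and $S$ is a continuous distribution function with $S(0)=0$ and $S(1)=1$, hence onto by the intermediate value theorem. Concretely, given $x\in[0,1]$ I would take any $P_3$-name $\Delta^{P_3}_{j_1j_2\dots}=x$ and set $\gamma_k:=\theta(j_k)$, which yields $\Delta^{P_\theta}_{\gamma_1\gamma_2\dots}=x$.

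For the multiplicity statement I would count names. By the first paragraph $\Phi$ restricts to a bijection between the set of $P_\theta$-names of a fixed $x$ and the set of its $P_3$-names, so these two sets have equal cardinality. The latter is controlled by the hypothesis $0<p_t<1$: it forces the measure to charge every cylinder, so $S$ is strictly increasing and therefore injective, and $\Delta^{P_3}_{j_1j_2\dots}=S(\Delta^{3}_{j_1j_2\dots})$ shows that $x$ has exactly as many $P_3$-names as $S^{-1}(x)$ has $3$-ary expansions. By the Remark this count is $2$ when $x$ is $P_3$-rational (that is, $x=S(\text{a }3\text{-rational})$) and $1$ otherwise. Transporting this dichotomy through $\Phi$ identifies the numbers with two $P_\theta$-names as exactly the $P_3$-rationals and shows every remaining number has a unique $P_\theta$-name, which is the asserted split into $P_\theta$-rational and $P_\theta$-irrational points. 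Applying $\Phi$ to the pair of $P_3$-names $\Delta^{P_3}_{j_1\dots j_{m-1}j_m(0)}=\Delta^{P_3}_{j_1\dots j_{m-1}[j_m-1](2)}$ of a $P_3$-rational (with $j_m\ge1$) then produces the two explicit $P_\theta$-names and their digit pattern.

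The genuine work lies in the multiplicity count rather than in existence: one must use that $\theta$ is a \emph{bijection}, so that $\Phi$ neither merges nor creates names, together with the strict monotonicity of $S$, so that no coincidences of $P_3$-values occur beyond the $3$-rational ones. The non-monotonicity of $\Phi$ itself, caused by $\theta(1)=2>1=\theta(2)$, is harmless here, because the argument tracks only the set of attained values and the cardinality of fibres, never the order in which the representations appear.
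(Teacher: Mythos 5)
Your central argument is correct, and it is genuinely different from --- and considerably more complete than --- the paper's own proof. The paper proves the lemma by introducing the map $f$ of \eqref{eq: 2}, recalling from an external reference that $f$ is singular, nowhere monotone, continuous exactly at the $P_3$-irrational points, and has a fractal graph, and then asserting that the statement follows; none of those analytic properties by itself yields the existence claim or the one-versus-two count of names. Your proof supplies exactly that missing content, and it does so at the level of digit sequences rather than through the point map $f$: since $\theta$ is an involution of $\{0,1,2\}$, the induced map $\Phi$ is a self-inverse bijection of $\{0,1,2\}^{\infty}$ carrying the set of $P_\theta$-names of each point onto its set of $P_3$-names; existence then follows from surjectivity of the Salem distribution function $S$, and the multiplicity count from strict monotonicity (hence injectivity) of $S$ together with the classical fact that a number has two ternary expansions precisely when it is $3$-rational. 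This is self-contained, avoids any appeal to the cited paper, and is in substance the argument the lemma actually needs.

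There is, however, one genuine defect in your last step, where you assert that the transported dichotomy ``is the asserted split into $P_\theta$-rational and $P_\theta$-irrational points.'' Carry your own computation to the end: applying $\Phi$ to the two $P_3$-names $j_1\dots j_{m-1}j_m(0)$ and $j_1\dots j_{m-1}[j_m-1](2)$ produces the $P_\theta$-names $\theta(j_1)\dots\theta(j_{m-1})\theta(j_m)(0)$ and $\theta(j_1)\dots\theta(j_{m-1})\theta(j_m-1)(1)$. Because $\theta(2)=1$, the second name ends in a tail of $1$'s, not $2$'s, and its $m$-th digit is $\theta(j_m-1)$, which is not $\theta(j_m)-1$ (for $j_m=1$ these digits are $2$ and $0$). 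So the doubly represented numbers are paired as $\gamma_1\dots\gamma_m(0)$ with $\gamma_1\dots\gamma_{m-1}\theta(\theta(\gamma_m)-1)(1)$, which contradicts the paper's displayed definition of $P_\theta$-rational, namely $\Delta^{P_\theta}_{\gamma_1\dots\gamma_{m-1}\gamma_m(0)}=\Delta^{P_\theta}_{\gamma_1\dots\gamma_{m-1}[\gamma_m-1](2)}$. Indeed that displayed identity is false under \eqref{eq:1} for every admissible choice of $(p_0,p_1,p_2)$: for example $\Delta^{P_\theta}_{1(0)}=\Delta^{P_3}_{2(0)}=\beta_2=p_0+p_1=\Delta^{P_\theta}_{2(1)}$, whereas $\Delta^{P_\theta}_{0(2)}=\Delta^{P_3}_{0(1)}=p_0^2/(p_0+p_2)<p_0+p_1$. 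Your argument is therefore correct provided ``$P_\theta$-rational'' is read as ``number possessing a $P_\theta$-name with tail $(0)$'' (equivalently, the value under \eqref{eq:1} of a finite digit string); but as written, your final sentence claims agreement with the paper's definition that does not literally hold, and a complete write-up must either adopt that corrected reading explicitly or point out that the paper's pairing of the two names has to be replaced by the $(0)$/$(1)$ pairing above.
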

\begin{proof}
Let us consider the following function
\begin{equation}
\label{eq: 2}
f: x=\Delta^{P_3} _{i_1i_2...i_k...}~~~\to~~~\Delta^{P_3} _{\theta(i_1)\theta(i_2)...\theta(i_k)...}=\Delta^{P_\theta} _{\gamma_1\gamma_2...\gamma_k...}=f(x)=y,
\end{equation}
where $\theta(0)=0$, $\theta(1)=2$, and $\theta(2)=1$.

The last function was investigated in \cite{Symon23}, its partial cases are considered in \cite{{S. Serbenyuk preprint2}, {S. Serbenyuk functions with complicated local structure 2013}}. Let us recall some its properties:
\begin{itemize}
\item this function is continuous at $P_3$-irrational points, and $P_3$-rational points are points of its discontinuity;
\item the function is a singular nowhere monotone function;
\item the graph is a fractal dust.
\end{itemize}

The statement follows from the last-mentioned properties of $f$.
\end{proof}

Suppose $c_1,c_2,\dots , c_m$ is an ordered tuple of digits from $\{0,1,2\}$. Then \emph{a cylinder $\Lambda^{P_\theta} _{c_1c_2...c_m}$ of rank $m$ with base $c_1c_2\ldots c_m$} is a set of the form:
$$
\Lambda^{P_\theta} _{c_1c_2...c_m}\equiv\left\{x: x=\Delta^{P_\theta} _{c_1c_2...c_m\gamma_{m+1}\gamma_{m+2}\gamma_{m+3}...}, \gamma_t\in\{0,1,2\}, t>m \right\},
$$

\begin{lemma}
\label{lm: cylinder's properties}
Cylinders $\Delta^{P_\theta} _{c_1c_2...c_m}$ have the following properties:
\begin{enumerate}
\item Any cylinder $\Lambda^{P_\theta} _{c_1c_2...c_m}$ is a closed interval, as well as 
$$
\Lambda^{P_\theta} _{c_1c_2...c_m}:=\left[\Delta^{P_\theta} _{c_1c_2...c_m(0)},\Delta^{P_\theta} _{c_1c_2...c_m(2)}\right].
$$
\item For the Lebesgue measure $|\cdot |$ of a set, the following holds:
$$
\left|\Lambda^{P_\theta} _{c_1c_2...c_m}\right|=\prod^{m} _{r=1}{p_{c_r}}=\prod^{m} _{r=1}{p_{\theta(d_r)}},
$$
where $\theta(d_r)=c_r$ for all $r=\overline{1, m}$.
\item
$$
\Lambda^{P_\theta} _{c_1c_2...c_mc}\subset \Lambda^{P_\theta} _{c_1c_2...c_m}.
$$
\item
$$
\Lambda^{P_\theta} _{c_1c_2...c_m}=\bigcup^2 _{c=0}{\Lambda^{P_\theta} _{c_1c_2...c_mc}}.
$$
\item Cylinders $\Lambda^{P_\theta} _{c_1c_2...c_m}$ are  left-to-right situated.
\end{enumerate}
\end{lemma}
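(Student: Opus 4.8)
The plan is to exploit the self-affine structure of \eqref{eq:1} together with the surjectivity supplied by the previous lemma. First I would fix a base $c_1c_2\ldots c_m$ and separate the first $m$ summands in \eqref{eq:1}: for $x=\Delta^{P_\theta}_{c_1\ldots c_m\gamma_{m+1}\gamma_{m+2}\ldots}$ one gets
$$
x=s_m+w_m\,\Delta^{P_\theta}_{\gamma_{m+1}\gamma_{m+2}\ldots},\qquad
s_m=\sum_{k=1}^{m}\beta_{\theta(c_k)}\prod_{r=1}^{k-1}p_{\theta(c_r)},\qquad
w_m=\prod_{r=1}^{m}p_{\theta(c_r)},
$$
so every point of the cylinder is the affine image $s_m+w_m t$ of a tail value $t=\Delta^{P_\theta}_{\gamma_{m+1}\ldots}$. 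By Lemma~1 the tail value sweeps out the whole segment $[0,1]$, attaining $t=0$ at the tail $(0)$ and $t=1$ at the tail $(1)$, since $\Delta^{P_\theta}_{(1)}=\beta_2\sum_{j\ge0}p_2^{\,j}=\frac{p_0+p_1}{1-p_2}=1$. Hence $\Lambda^{P_\theta}_{c_1\ldots c_m}=[\,s_m,\;s_m+w_m\,]$ is a closed interval whose left end is the tail-$(0)$ point and whose right end is the maximal-tail point; this is assertion~(1). Since an interval has Lebesgue measure equal to its length, assertion~(2) drops out at once: $\bigl|\Lambda^{P_\theta}_{c_1\ldots c_m}\bigr|=w_m=\prod_{r=1}^{m}p_{\theta(c_r)}$, i.e.\ the product of the digit weights $p_{d_r}$ with $d_r=\theta(c_r)$.

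Next I would append one further digit $c\in\{0,1,2\}$. The same splitting, now at level $m+1$, shows that $\Lambda^{P_\theta}_{c_1\ldots c_mc}$ has left endpoint $s_m+w_m\beta_{\theta(c)}$ and length $w_mp_{\theta(c)}$, so it is a subinterval of the parent $[\,s_m,s_m+w_m\,]$; this gives~(3). The three child lengths sum to $w_m\bigl(p_{\theta(0)}+p_{\theta(1)}+p_{\theta(2)}\bigr)=w_m(p_0+p_1+p_2)=w_m$, and using $\beta_{j}+p_{j}=\beta_{j+1}$ one checks that consecutive children abut (the right end of one equals the left end of the next), so they tile the parent with overlaps only at endpoints; combined with the surjectivity of the tail this is the partition~(4). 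Finally the child offsets $w_m\beta_{\theta(c)}$ equal $0,\,w_mp_0,\,w_m(p_0+p_1)$ for $c=0,2,1$, so the children occur from left to right in the order $0,2,1$, which is the content of~(5).

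The single delicate point---and the step I expect to be the main obstacle---is that $\theta$ is not order preserving: it fixes $0$ but interchanges $1$ and $2$. Thus the natural digit order $0<1<2$ is \emph{not} the spatial order of the cylinders (that order is $0,2,1$), and the extremal tails are reversed relative to the classical Salem case: the supremum of a tail is reached at $(1)$ rather than at $(2)$, since $\Delta^{P_\theta}_{(1)}=1$ while $\Delta^{P_\theta}_{(2)}=\frac{p_0}{p_0+p_2}<1$. I would therefore record the right endpoint in~(1) as the maximal-tail point $\Delta^{P_\theta}_{c_1\ldots c_m(1)}=s_m+w_m$ and list the children in~(4)--(5) in the spatial order $0,2,1$. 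Equivalently, all five assertions can be obtained in one stroke from the set identity $\Lambda^{P_\theta}_{c_1\ldots c_m}=\Lambda^{P_3}_{\theta(c_1)\ldots\theta(c_m)}$---valid because applying $\theta$ to the tail digits is a bijection of $\{0,1,2\}^{\mathbb N}$---together with the known interval, measure, nesting, and ordering properties of the ordinary Salem $P_3$-cylinders, the only bookkeeping being the transport of the extremal tails and of the left-to-right order through the involution $\theta$.
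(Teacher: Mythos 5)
Your underlying computation --- splitting the series defining a point of the cylinder at position $m$ into $s_m+w_m t$ with $t$ the tail value, noting that $t$ sweeps out $[0,1]$, and letting the three children tile the parent --- is exactly the computation in the paper's proof. The genuine problem is the meaning you assign to the symbol $\Delta^{P_\theta}_{c_1c_2\ldots}$: you evaluate it as the Salem value of the string $\theta(c_1)\theta(c_2)\ldots$, i.e.\ with weights $\beta_{\theta(c_k)}$, $p_{\theta(c_k)}$. The paper works with the other reading of \eqref{eq:1}: in $\Delta^{P_\theta}_{c_1c_2\ldots}$ the digits $c_k$ are \emph{already} the images $c_k=\theta(d_k)$ of the digits of a source point, and the numerical value is the Salem value of the string $c_1c_2\ldots$ itself, with weights $\beta_{c_k}$, $p_{c_k}$ and no further application of $\theta$. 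This convention is unambiguous from the paper's own proof: in \eqref{eq: 19}--\eqref{eq: 21} the infimum of the cylinder is the $(0)$-tail point, the supremum is the $(2)$-tail point and equals $f\bigl(\Delta^{P_3}_{d_1\ldots d_m(1)}\bigr)$, while $f\bigl(\Delta^{P_3}_{d_1\ldots d_m(2)}\bigr)=\Delta^{P_\theta}_{c_1\ldots c_m(1)}$ is merely an interior point; the displayed expansion of $x$ in the middle of the proof carries the weights $\beta_{c_k}$, $p_{c_r}$; the two zero differences at the end give the spatial order $0,1,2$; and the same convention reappears in the theorem that follows (where $\Delta^{P_\theta}_{11(0)}=\beta_1+\beta_1p_1$) and in the identification of $P_\theta$-rational numbers. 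Under that convention $\Lambda^{P_\theta}_{c_1\ldots c_m}$ is literally the Salem cylinder $\Lambda^{P_3}_{c_1\ldots c_m}$, and all five assertions hold exactly as written.

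Because you insert an extra $\theta$ inside the valuation, you arrive at conclusions that contradict the statement you were asked to prove: your supremum is the $(1)$-tail point rather than $\Delta^{P_\theta}_{c_1\ldots c_m(2)}$, your measure is $\prod_{r}p_{\theta(c_r)}$ rather than $\prod_{r}p_{c_r}$ (these differ in general when $p_1\neq p_2$), and your spatial order of the children is $0,2,1$ rather than the order $0,1,2$ established by the paper's computation for property (5); indeed you explicitly propose to amend items (1) and (5), and your version of (2) silently replaces the stated product by a different one. So, as a proof of the lemma as stated, the argument fails --- not through any analytic error, but because it proves a different, incompatible statement. The ambiguity in how \eqref{eq:1} is typeset (digits $\gamma_k$ on the left, $i_k$ on the right) is real, but the paper's usage settles it, and the repair of your text is mechanical: delete every interior $\theta$ (take $s_m=\sum_{k\le m}\beta_{c_k}\prod_{r<k}p_{c_r}$, $w_m=\prod_{r\le m}p_{c_r}$, maximal tail $(2)$ with value $\beta_2/(1-p_2)=1$, children abutting in the order $0,1,2$ via $\beta_{j}+p_{j}=\beta_{j+1}$). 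After that change your argument becomes, almost word for word, the paper's proof. The digits $d_r$ with $\theta(d_r)=c_r$ in item (2) are not meant to enter the valuation at all; they only record that this cylinder is the $f$-image of $\Lambda^{P_3}_{d_1\ldots d_m}$, which is precisely what the paper exploits afterwards to show that $f$ does not preserve the Lebesgue measure.
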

\begin{proof} Choose a certain $x_0\in \Lambda^{P_3} _{d_1d_2...d_m}$ such that $\theta(d_r)=c_r$ for all $r=\overline{1, m}$.

For proving, one can use properties of $f$. Then
\begin{equation}
\label{eq: 18}
\begin{split}
f(x_0)&=f\left(\Delta^{P_3} _{d_1d_2...d_mi_{m+1}i_{m+2}...}\right)\\
&=\Delta^{P_3} _{\theta(d_1)\theta(d_2)...\theta(d_m)\theta(i_{m+1})\theta(i_{m+2})...}\\
&=\Delta^{P_\theta} _{c_1c_2...c_m\gamma_{m+1}\gamma_{m+2}...}\in \Lambda^{P_\theta} _{c_1c_2...c_m}.
\end{split}
\end{equation}
Also,
\begin{equation}
\label{eq: 19}
f\left(\inf \Lambda^{P_3} _{d_1d_2...d_m}\right)=f\left(\Delta^{P_3} _{d_1d_2...d_m(0)}\right)={\Delta^{P_\theta} _{c_1c_2...c_m(0)}}=\inf\Lambda^{P_\theta} _{c_1c_2...c_m}\in \Lambda^{P_\theta} _{c_1c_2...c_m},
\end{equation}
\begin{equation}
\label{eq:20}
f\left(\sup \Lambda^{P_3} _{d_1d_2...d_m}\right)=f\left(\Delta^{P_3} _{d_1d_2...d_m(2)}\right)={\Delta^{P_\theta} _{c_1c_2...c_m(1)}}\in \Lambda^{P_\theta} _{c_1c_2...c_m},
\end{equation}
and
\begin{equation}
\label{eq: 21}
\sup\Lambda^{P_\theta} _{c_1c_2...c_m}=\Delta^{P_\theta} _{c_1c_2...c_m(2)}=f\left(\Delta^{P_3} _{d_1d_2...d_m(1)}\right).
\end{equation}

Let us prove that a cylinder is a segment. Suppose that $x\in \Delta^{P_\theta} _{c_1c_2...c_m}$. That is,
$$
x=\beta_{c_1}+\sum^{m} _{k=2}{\left(\beta_{c_k}\prod^{k-1} _{r=1}{p_{c_r}}\right)}+\left(\prod^{k} _{r=1}{p_{c_r}}\right)\left(\beta_{\gamma_{m+1}}+\sum^{\infty} _{t=m+2}{\left(\beta_{\gamma_t}\prod^{t-1} _{s=m+1}{p_{\gamma_s}}\right)}\right), 
$$
where $\gamma_{t}\in\{0,1,2\}$ for $t=m+1, m+2, m+3, \dots$.
Whence,
$$
x^{'}=\beta_{c_1}+\sum^{m} _{k=2}{\left(\beta_{c_k}\prod^{k-1} _{r=1}{p_{c_r}}\right)}\le x\le \beta_{c_1}+\sum^{m} _{k=2}{\left(\beta_{c_k}\prod^{k-1} _{r=1}{p_{c_r}}\right)}+\left(\prod^{k} _{r=1}{p_{c_r}}\right)=x^{''}.
$$
So $x\in\left[x^{'}, x^{''}\right]\supseteq\Lambda^{P_\theta} _{c_1c_2...c_m}$. Since
$$
x^{'}=\beta_{c_1}+\sum^{m} _{k=2}{\left(\beta_{c_k}\prod^{k-1} _{r=1}{p_{c_r}}\right)}+\left(\prod^{k} _{r=1}{p_{c_r}}\right)\inf\left(\beta_{\gamma_{m+1}}+\sum^{\infty} _{t=m+2}{\left(\beta_{\gamma_t}\prod^{t-1} _{s=m+1}{p_{\gamma_s}}\right)}\right)
$$
and
$$
x^{''}=\beta_{c_1}+\sum^{m} _{k=2}{\left(\beta_{c_k}\prod^{k-1} _{r=1}{p_{c_r}}\right)}+\left(\prod^{k} _{r=1}{p_{c_r}}\right)\sup\left(\beta_{\gamma_{m+1}}+\sum^{\infty} _{t=m+2}{\left(\beta_{\gamma_t}\prod^{t-1} _{s=m+1}{p_{\gamma_s}}\right)}\right)
$$
hold for any $x\in \Delta^{P_\theta} _{c_1c_2...c_m}$, we obtain $x, x^{'}, x^{''}\in  \Delta^{P_\theta} _{c_1c_2...c_m}$.

So, \emph{the first and second properties are proven}.

Let us prove \emph{the third property}. Suppose ${d_1, d_2 \dots , d_m}$ is a fixed tuple of digits from $\{0, 1, 2\}$ such that $\theta(d_r)=c_r$ for all $r=\overline{1, m}$. Then
$$
f\left(\inf \Lambda^{P_3} _{d_1d_2...d_m d}\right)=\inf\Lambda^{P_\theta} _{c_1c_2...c_nc}\in \Delta^{P_\theta} _{c_1c_2...c_nc},
$$
where $\theta(d)=c$, and
$$
\inf\Lambda^{P_\theta} _{c_1c_2...c_nc}< f\left(\sup \Lambda^{P_3} _{d_1d_2...d_m d}\right) <\sup\Lambda^{P_\theta} _{c_1c_2...c_nc}.
$$
Hence, using the first and second properties of cylinders, as well as relationships \eqref{eq: 18}-\eqref{eq: 21}, we have
$$
\inf{\Lambda^{P_\theta} _{c_1c_2...c_mc}}\ge \inf{\Lambda^{P_\theta} _{c_1c_2...c_m}}
$$
and
$$
\sup{\Lambda^{P_\theta} _{c_1c_2...c_mc}}\le \sup{\Lambda^{P_\theta} _{c_1c_2...c_m}}.
$$

So, 
$$
\Delta^{P_\theta} _{c_1c_2...c_mc}\subset \Delta^{P_\theta} _{c_1c_2...c_m}=\bigcup^2 _{c=0}{\Delta^{P_\theta} _{c_1c_2...c_mc}}.
$$

Let us prove tha last property. Let us consider the differences:
$$
\inf\Lambda^{P_\theta} _{c_1c_2...c_{m-1}2}-\sup\Lambda^{P_\theta} _{c_1c_2...c_{m-1}1}=\Delta^{P_\theta} _{c_1c_2...c_{m-1}2(0)}-\Delta^{P_\theta} _{c_1c_2...c_{m-1}1(2)}=0,
$$
$$
\inf\Lambda^{P_\theta} _{c_1c_2...c_{m-1}1}-\sup\Lambda^{P_\theta} _{c_1c_2...c_{m-1}0}=\Delta^{P_\theta} _{c_1c_2...c_{m-1}1(0)}-\Delta^{P_\theta} _{c_1c_2...c_{m-1}0(2)}=0.
$$
Our Lemma is proven. 
\end{proof}

\begin{theorem} The map
$$
f: x=\Delta^{P_3} _{i_1i_2...i_k...} \to\Delta^{P_3} _{\theta(i_1)\theta(i_2)...\theta(i_k)...}=\Delta^{P_\theta} _{\gamma_1\gamma_2...\gamma_k...}=y
$$
does not preserve a distance  between points and the Lebesgue measure of an  interval (segment).
\end{theorem}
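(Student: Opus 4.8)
The plan is to establish both non-preservation claims by exhibiting explicit witnesses, exploiting the one structural fact that drives everything: the map $f$ acts on a Salem ($P_3$) expansion by applying the digit-involution $\theta$ (which fixes $0$ and transposes $1$ and $2$) to every symbol. Since $\theta$ is a bijection of the alphabet $\{0,1,2\}$, it follows that $f$ sends the cylinder $\Lambda^{P_3}_{d_1\ldots d_m}$ onto the cylinder $\Lambda^{P_3}_{\theta(d_1)\ldots\theta(d_m)}$ as a set (every continuation maps bijectively to a continuation). I would combine this with the analytic form of the Salem representation, using $\beta_1=p_0$ and $\beta_2=p_0+p_1$, and with the cylinder measure formula of Lemma~\ref{lm: cylinder's properties}.

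For the distance, I would test the map on two concrete points whose relative position is distorted by the transposition on the first coordinate. Take $x_1=\Delta^{P_3}_{(0)}=0$ and $x_2=\Delta^{P_3}_{1(0)}$; from the Salem formula $x_2=\beta_1=p_0$, so $|x_1-x_2|=p_0$. Their images are $f(x_1)=\Delta^{P_\theta}_{(0)}=0$ and $f(x_2)=\Delta^{P_\theta}_{1(0)}=\Delta^{P_3}_{2(0)}=\beta_2=p_0+p_1$, whence $|f(x_1)-f(x_2)|=p_0+p_1$. Because $p_1>0$, the two distances differ for every admissible $P_q$, so $f$ is not an isometry. The mechanism is transparent: the gap between cylinder $0$ and cylinder $1$, of length $p_0$, is replaced after the swap $1\leftrightarrow 2$ by the gap between cylinder $0$ and cylinder $2$, of length $p_0+p_1$.

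For the Lebesgue measure I would run the same idea on a rank-one cylinder. Let $I=\Lambda^{P_3}_{1}=[\,p_0,\;p_0+p_1\,]$, so $|I|=p_1$ by Lemma~\ref{lm: cylinder's properties}. By the structural fact above, $f(I)$ equals, as a set, the cylinder $\Lambda^{P_3}_{\theta(1)}=\Lambda^{P_3}_{2}=[\,p_0+p_1,\;1\,]$, whose Lebesgue measure is $p_2$. Hence $|f(I)|=p_2\neq p_1=|I|$ as soon as $p_1\neq p_2$, and therefore $f$ does not preserve the measure of an interval.

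The step I expect to require the most care is the identification of the image set $f(I)$. Since $f$ is singular and nowhere monotone, and is discontinuous at the $P_3$-rational points, the image of an interval need not be an interval, so one cannot simply follow endpoints; the resolution is precisely that the image of a full cylinder is again a full cylinder, because $\theta$ permutes the entire alphabet, after which the product measure formula applies. I would also flag the honest caveat that the measure statement genuinely needs $p_1\neq p_2$: in the symmetric subcase $p_1=p_2$ the transposition leaves every cylinder length $\prod_r p_{d_r}$ invariant, so there $f$ does preserve interval measure. The distance statement, by contrast, holds with no restriction on $P_q$.
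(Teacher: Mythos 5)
Your proposal is correct, and its overall strategy is the same as the paper's (explicit witnesses plus the cylinder-measure formula of Lemma~\ref{lm: cylinder's properties}), but your witnesses are genuinely different and in one respect stronger. For the distance claim the paper fixes numerical probabilities $p_0=\tfrac12$, $p_1=\tfrac13$, $p_2=\tfrac16$ and takes $x_1=\Delta^{P_3}_{22(0)}$, $x_2=\Delta^{P_3}_{21(0)}$, obtaining $|x_1-x_2|=p_1p_2$ versus $|f(x_1)-f(x_2)|=p_1^2$; that discrepancy exists only when $p_1\ne p_2$. Your pair $x_1=\Delta^{P_3}_{(0)}=0$, $x_2=\Delta^{P_3}_{1(0)}$ gives $p_0$ versus $p_0+p_1$, which differ for \emph{every} admissible parameter vector, so your distance argument establishes the claim uniformly in $P_3$ rather than for a particular choice. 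For the measure claim the two arguments are essentially identical — the paper uses the rank-$5$ cylinder $\Lambda^{P_3}_{11122}$ with $p_0=p_2=\tfrac14$, $p_1=\tfrac12$, you use the rank-$1$ cylinder $\Lambda^{P_3}_{1}$ — and both need $p_1\ne p_2$; your explicit caveat that in the symmetric case $p_1=p_2$ the map \emph{does} preserve interval measure (so that part of the theorem genuinely requires $p_1\ne p_2$) is correct, and the paper only gestures at this with the phrase ``in a general case.'' Two minor points: your assertion that $f(\Lambda^{P_3}_{1})$ ``equals, as a set,'' the cylinder $\Lambda^{P_3}_{2}$ is true only up to a countable set, since $P_3$-rational points have two representations and $f$ acts on a chosen representation (this is exactly the source of its jump discontinuities); this is harmless for the Lebesgue-measure computation, and the paper is no more careful on this point. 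Finally, unlike the paper you do not treat intervals that are not cylinders; that is not a gap, because non-preservation is an existential statement and a single cylinder witness suffices — the paper's covering argument for general intervals is logically superfluous for the theorem as stated.
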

\begin{proof}
Let us prove that $f$ does not preserve a distance. Let us choose $x_1, x_2\in[0,1]$ such that  the condition $|f(x_2)-f(x_1)|\ne |x_2-x_1|$ holds. The statement follows from the existence of jump discontinuities of $f$. Really, for example, suppose that $p_0=\frac 1 2$, $p_1=\frac 1 3$, $p_2=\frac 1 6$, as well as $x_1=\Delta^{P_3} _{22(0)}$ and $x_2=\Delta^{P_3} _{21(0)}$. Then 
$$
|x_1-x_2|=\beta_2+\beta_2p_2-\beta_2-\beta_1p_2=p_2(\beta_2-\beta_1)=p_2(p_0+p_1-p_0)=p_1p_2=\frac{1}{18}
$$
and
$$
|f(x_2)-f(x_1)|=\left|\Delta^{P_\theta} _{11(0)}-\Delta^{P_\theta} _{12(0)}\right|=|\beta_1+\beta_1p_1-\beta_1-\beta_2p_1|=|p_1(\beta_1-\beta_2)|=\frac 1 9\ne|x_1-x_2|.
$$

Let us prove that $f$ does not preserve the Lebesgue measure. Suppose  $[x_1, x_2]\subset [0,1]$  is a segment. 

If $[x_1, x_2]\equiv \Lambda^{P_3} _{d_1d_2...d_m}$, then considering the last lemma, we get 
$$
\left|\Lambda^{P_3} _{d_1d_2...d_m}\right|=\prod^{m} _{t=1}{p_{d_t}}
$$
and
$$
\left|f\left(\Lambda^{P_3} _{d_1d_2...d_m}\right)\right|=\left|\Lambda^{P_\theta} _{c_1c_2...c_m}\right|=\prod^{m} _{t=1}{p_{c_t}}=\prod^{m} _{t=1}{p_{\theta(d_t)}}.
$$
In  a general case, 
$$
\prod^{m} _{t=1}{p_{d_t}}\ne\prod^{m} _{t=1}{p_{\theta(d_t)}}.
$$
Really, suppose $p_0=\frac 1 4$, $p_1=\frac 1 2$, and $p_2=\frac 1 4$. For example, then
$$
\left|\Lambda^{P_3} _{11122}\right|=p^3 _1 p^2 _2=\frac 1 8\cdot \frac{1}{16}=\frac{1}{128}
$$
but
$$
\left|f\left(\Lambda^{P_\theta} _{22211}\right)\right|=p^3 _2 p^2 _1=\frac{1}{64}\cdot\frac 1 4=\frac{1}{256}.
$$

Let $[x_1, x_2]\subset [0,1]$ be a segment that are not a certain cylinder $\Delta^{P_3} _{d_1d_2...d_m}$; then there exists $\varepsilon$-covering by cylinders of rank $k$ and $m$ such that 
$$
\bigcup_m \Delta^{P_3} _{d_1d_2...d_m}\subseteq
[x_1, x_2]\subseteq\bigcup_k \Delta^{P_3} _{d_1d_2...d_k}
$$
and 
$$
\lim_{k\to\infty}{\left|\bigcup_k \Delta^{P_3} _{d_1d_2...d_k}\right|}=\lim_{m\to\infty}{\left|\bigcup_m \Delta^{P_3} _{d_1d_2...d_m}\right|}=|[x_1, x_2]|.
$$
Whence, in this case, our results depnds on the case when $[x_1, x_2]$  is a certain cylinder.
\end{proof}

\begin{remark}
It is useful the problem on geometry of a positive expansion of real numbers and its corresponding alternating or sign-variable expansion. This connection is useful for studying metric, dimensional and other properties of mathematical objects defined by these representations of real numbers. The last lemma give the answer on this problem.

Let $\Delta_{i_1i_2...i_k...}, i_k\in A_k,$ be the representation of a number $x$ from some interval by a positive expansion. One can model the corresponding alternating  (sign-variable; here the case of alternating expansions is explained) representation  $\Delta^{-}_{j_1j_2...j_k...}$ by the following way (relationship):
$$
\Delta^{-}_{j_1j_2...j_k...}\equiv \Delta_{\theta^{'}(i_1)i_2\theta^{'}(i_3)i_4...\theta^{'}(i_{2k-1}).i_{2k}...},
$$
where cylinders of even rank are left-to-right situated and cylinders of odd rank are right-to-left situated, as well as
$$
\theta^{'}(i_{2k-1})=\max_{i_{2k-1}\in A_{2k-1}}{\{i_{2k-1}\}}- i_{2k-1},
$$
or
$$
\Delta^{-}_{j_1j_2...j_k...}\equiv \Delta_{i_1\theta^{'}(i_2)i_3\theta^{'}(i_4)i_5...i_{2k-1}\theta^{'}(i_{2k})...}
$$
and here cylinders of odd rank are left-to-right situated but cylinders of even rank are right-to-left situated, as well as
$$
\theta^{'}(i_{2k})=\max_{i_{2k}\in A_{2k}}{\{i_{2k}\}}- i_{2k}.
$$

Let us consider an example. For the case of $P_3$-representation, we obtain
$$
\Delta^{-P_3}_{j_1j_2...j_k...}\equiv \Delta^{P_3} _{i_1[2-i_2]i_3[2-i_4]i_5...i_{2k-1}[2-i_{2k}]...}
$$
or
$$
\Delta^{-P_3}_{j_1j_2...j_k...}\equiv \Delta^{P_3} _{[2-i_1]i_2[2-i_3]i_4...i_{2k-2}[2-i_{2k-1}]...}.
$$
Suppose even rank cylinders are left-to-right situated, as well as $p_0=\frac 1 2, p_1=\frac 1 3$, and $p_2=\frac 1 6$; then
$$
\left|\Lambda^{P_3} _{121200}\right|=p^2 _0p^2 _1p^2 _2=\frac{1}{4\cdot 9\cdot 36}=\frac{1}{1296}
$$
and
$$
\left|\Lambda^{-P_3} _{121220}\right|=p_0p^2 _1p^3 _2=\frac{1}{2\cdot 9\cdot 216}=\frac{1}{3888}.
$$
\end{remark}
\begin{corollary}
In a general case, the metric theories of the $P_q$- and quasi-nega-$P_q$-representations can be different. 
\end{corollary}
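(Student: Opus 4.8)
The plan is to reduce the claim to a comparison of the Lebesgue measures that the two representations attach to their cylinders, since these measures are the raw material of any metric theory (they drive Hausdorff- and Billingsley-dimension computations, the local behaviour of the associated distribution functions, and so on). By the second property of the preceding lemma, the $P_q$-cylinder with base $c_1c_2\dots c_m$ has measure $\prod_{r=1}^{m} p_{c_r}$, a quantity depending only on the multiset of its digits; this order-insensitivity reflects that all $P_q$-cylinders are left-to-right situated.

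First I would recall from the preceding Remark how the quasi-nega-$P_q$-representation is assembled from the positive one: at the ranks whose cylinders are right-to-left situated the digit $i$ is replaced by its reflection $\theta'(i)=(q-1)-i$, the remaining ranks being left untouched. Hence a digit condition read off a nega-address translates, in the positive coordinates, into the same condition with certain digits reflected, so the measure naturally carried by a nega-cylinder is $\prod_{r=1}^{m} p_{c_r}$ with some of the $c_r$ reflected. Next I would exhibit one non-symmetric probability vector on which the two products disagree; the Remark already supplies such a witness, namely $p_0=\tfrac12,\ p_1=\tfrac13,\ p_2=\tfrac16$, for which $\left|\Lambda^{P_3}_{121200}\right|=\tfrac{1}{1296}$ while $\left|\Lambda^{-P_3}_{121220}\right|=\tfrac{1}{3888}$. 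Since corresponding cylinders receive different measures, the two metric theories cannot coincide.

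The main obstacle is conceptual rather than computational, and it fixes the meaning of the hedging phrase ``in a general case''. One must isolate exactly when the two theories genuinely diverge: the reflection $\theta'$ interchanges $p_t$ and $p_{q-1-t}$ at the reflected ranks, so the two cylinder-measure products always agree when $P_q$ is symmetric (that is, $p_t=p_{q-1-t}$ for all $t$), and the theories then collapse to one. It is precisely the existence of an index $t$ with $p_t\neq p_{q-1-t}$ --- the non-symmetry of $P_q$ --- that makes some reflected base carry a different measure and thereby separates the two metric theories; the example above realizes this for $q=3$. I expect the only delicate point to be arguing that a single discrepancy of cylinder measures already forces the full metric theories apart, which follows because the cylinders form the generating semiring for the Borel structure each representation carries.
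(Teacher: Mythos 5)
Your proposal is correct and follows essentially the paper's own route: the corollary is justified there by the preceding Remark's example with $p_0=\tfrac12$, $p_1=\tfrac13$, $p_2=\tfrac16$, comparing $\left|\Lambda^{P_3}_{121200}\right|=\tfrac{1}{1296}$ with $\left|\Lambda^{-P_3}_{121220}\right|=\tfrac{1}{3888}$, which is exactly the witness you invoke. Your added observation that symmetry $p_t=p_{q-1-t}$ is the precise dividing line (explaining the phrase ``in a general case'') is a sensible refinement but not a different argument.
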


\end{document}